\DeclareMathOperator{\Hom}{Hom} 
\DeclareMathOperator{\diag}{diag}
\DeclareMathOperator{\Mat}{Mat}
\DeclareMathOperator{\Herm}{Herm}
\DeclareMathOperator{\Spec}{Spec}
\DeclareMathOperator{\Int}{Int}
\DeclareMathOperator{\Hess}{Hess}
\newcounter{stmcounter}[section]
\newcounter{thcounter}
\numberwithin{equation}{section}
\theoremstyle{plain}
\newtheorem{thm}[thcounter]{Theorem}
\newtheorem{prop}[stmcounter]{Proposition}
\newtheorem{lem}[stmcounter]{Lemma}
\theoremstyle{definition}
\newtheorem{defin}[stmcounter]{Definition}
\theoremstyle{remark}
\newtheorem{ex}[stmcounter]{Example}
\newtheorem{rem}[stmcounter]{Remark}
\begin{document}

\title{Orbit spaces of torus actions on Hessenberg varieties}

\author[V.\,V.~Cherepanov]{V.\,V.~Cherepanov}
\address{Faculty of Computer Science, Higher School of Economics, Moscow, Russia}
\email{vilamsenton@gmail.com}

\date{\today}
\thanks{This work is supported by the Russian Science Foundation under grant 18-71-00009.}


\keywords{torus action, complexity one, Alexander duality, Hessenberg variety}

\begin{abstract}
We consider effective actions of a compact torus $T^{n-1}$ on an even-dimensional smooth manifold $M^{2n}$ with isolated fixed points. We prove that under certain conditions on weights of tangent representations, the orbit space is a manifold with corners. Given that the action is Hamiltonian, the orbit space is homeomorphic to $S^{n+1} \setminus (U_1 \sqcup \ldots \sqcup U_l)$ where $S^{n+1}$ is the $(n+1)$--sphere and $U_1, \ldots, U_l$ are open domains. We apply the results to regular Hessenberg varieties and manifolds of isospectral Hermitian matrices of staircase form. 
\end{abstract}

\maketitle

\section{Introduction}

	Let $T^k$ be a compact torus effectively acting on a smooth compact orientable even-dimensional manifold $M^{2n}$ with isolated fixed points. We denote by $d = n - k$ the complexity of the action. Actions of complexity 0 are studied in toric topology (see \cite{BP1}, \cite{BP2}) while actions of positive complexity are mostly considered from the point of view algebraic and symplectic geometry.
	
	V. Buchstaber and S. Terzic (see \cite{BT1}, \cite{BT2}, \cite{BT3}) studied the orbit spaces of such torus actions. They introduced the notion of $(2n,k)$--manifolds and obtained topological models for such actions. They showed that for the actions of tori on the complex Grassmann manifold $G_{4,2}$ and the manifold $Fl_3$ of full complex flags, the orbit spaces are homeomorphic to the spheres $S^5$ and $S^4$ correspondingly. In \cite{A1}, A.~Ayzenberg showed that for actions of complexity 1 in general position, the orbit space is a topological manifold.
	
	In the paper, we prove the generalization of the result to the case of actions of complexity 1 not in general position. In the case, the orbit space is a manifold with corners, i.e. a manifold with boundary from a topological point of view. We introduce a natural partition $M^{2n} = \bigsqcup_i V_i$ by orbit types and specify elements $V_i$ such that their orbits are boundary points. Moreover, we prove the similar generalization of the result by Y. Karshon and S. Tolman \cite{KT} regarding Hamiltonian actions of complexity 1. Their method allows us to construct a model for the orbit space using the moment polytope $\mu(M^{2n})$. Note that similar results were obtained in \cite{Su} for smooth varieties.
	
	We consider manifolds $M_h$ of Hermitian matrices of staircase form (which is defined by a Hessenberg function $h$, see \cite{AB1}) with simple spectrum. Each manifold $M_h$ possesses a canonical torus action with nontrivial complexity unless $M_h$ is the manifold of isospectral tridiagonal matrices. The actions are not in general position and this yields that for $M_h$ with the action of complexity 1, the orbit spaces are topological manifolds with boundary. It turns out (see \cite{AB1}), that for each manifold $M_h$, there is a "twin" $H_h$ --- a regular semisimple Hessenberg variety defined by \nolinebreak $h$. The orbit spaces $M_h / T^k$ and $H_h / T^k$ coincide (again, see \cite{AB1}). We explicitly describe topology of the orbit spaces of $M_h$ in the case of $4 \times 4$ and $5 \times 5$ matrices and actions of complexity $d = 1$; it corresponds to the case of Hessenberg varieties $H_h$ in $\mathbb{C}^4$ and $\mathbb{C}^5$.
	
	The author expresses his great gratitude to Victor Buchstaber and Anton Ayzenberg for their guidance and help during the work; the author expresses his sincere thanks to Mikiya Masuda and Nikita Klemyatin for valuable discussions and support.
	
	\section{Partitions by orbit types and actions of complexity 1}
	
	Consider an effective action of the compact torus $T = T^k$ on a smooth compact manifold $M = M^{2n}$.
	
	Define the partition by orbit types $$M = \bigsqcup_i V_{G, i}$$
	
	where $V_{G, i}$ denotes a connected component of the set $M^G = \{x \in M: St(x) = \nolinebreak G\}$ of points $x$ with common stabilizers.
	
	As in \cite{A1}, we consider actions satisfying the following (*) conditions:
	
	$\bullet$ The set of fixed points $M^T$ is finite and each fixed point is isolated.
	
	$\bullet$ (Connected stabilizers) For any point $x \in M^{2n}$, its stabilizer subgroup $St(x) \subset \nolinebreak T^k$ is a torus $T_x \subset T^k$.
	
	$\bullet$ (Adjoint orbits) The closure of every partition element $\overline{V}_{G, i}$, $G \subset T$, contains a point $x'$ with  $\dim(T_{x'}) > \dim(G)$ (unless $V_{G, i}$ is a fixed point).
	
	
	
	
	Let us state the result by A. Ayzenberg (see \cite{A1}) concerning actions of complexity $d = 1$, i.e. actions of $T^{n-1}$ on $M^{2n}$.
	
	Let $x \in M^T$ be a fixed point. We have the tangent representation of $T^{n-1}$ at $x$ and let $\alpha_1, \ldots, \alpha_n \in \Hom(T^{n-1}, S^1)$ be its weights, i.e. $$T_x (M^{2n}) \cong V(\alpha_1) \otimes \ldots \otimes V(\alpha_n)$$
	where $V(\alpha_i)$ is the standard 1-dimensional complex representation given by $t[z] = \alpha_i(t) z$ for $t \in T^{n-1}$ and $z \in \mathbb{C}$.
	
	Since $\Hom(T^{n-1}, S^1) \cong \mathbb{Z}^{n-1}$, we have $n$ integer vectors $\alpha_1, \ldots, \alpha_n$ in $(n-1)$-dimensional space. Hence there is a relation:
	$$c_1 \alpha_1 + \ldots + c_n \alpha_n = 0$$
	--- for some $c_i \in \mathbb{Z}$ and we can suppose that $gcd(c_1, \ldots, c_n) = 1$. The weights are \textit{in general position} if $c_i \neq 0$ for $i = 1, \ldots, n$ --- or, equivalently, if every $n-1$ of $n$ weights are linearly independent (more generally: $n$ vectors in $\mathbb{Z}^k$ are in general position if every $k$ of them are linearly independent). An action $T^{n-1}$ on $M^{2n}$ is called an action in general position if weights of its tangent representation at every fixed point are in general position.
	
	\begin{thm}{\cite{A1}}
		Assume that an action of $T^{n-1}$ on $M^{2n}$ (subject to (*)) is in general position. Then the orbit space $Q = M^{2n} / T^{n-1}$ is a topological manifold.
	\end{thm}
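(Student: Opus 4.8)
The plan is to reduce everything to a local statement. Since $M^{2n}$ and $T^{n-1}$ are compact, $Q = M^{2n}/T^{n-1}$ is compact, Hausdorff and second countable, so it suffices to produce around each orbit a neighbourhood homeomorphic to $\Ro^{n+1}$ (necessarily the dimension of the orbit space over the free part). Fix an orbit $\mathcal{O} = T^{n-1}\cdot x$ with stabilizer $\St(x) = H \cong T^j$ (a subtorus, by (*)). Choosing an invariant metric, the slice theorem identifies a $T^{n-1}$-invariant neighbourhood of $\mathcal{O}$ with the tube $T^{n-1}\times_H B$, where $B$ is a small ball in the normal slice $N_x$ on which $H$ acts; passing to orbit spaces, the corresponding neighbourhood of $[\mathcal{O}]$ in $Q$ is homeomorphic to $B/H$. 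A real $T^j$-representation splits as $N_x \cong \Ro^m \oplus V(\beta_1)\oplus\cdots\oplus V(\beta_s)$ with $\beta_i \in \Hom(T^j,S^1)\setminus\{0\}$, and $H$ fixes the $\Ro^m$-factor, so (using that a neighbourhood of the vertex of a cone is homeomorphic to the whole cone) $B/H \cong \Ro^m\times(\Co^s/T^j)$. A dimension count gives $\dim B = 2n - (n-1) + j = n+1+j = m+2s$, so $\dim(B/H) = n+1$ automatically; the theorem thus reduces to showing $\Co^s/T^j \cong \Ro^{2s-j}$ for the slice representations that actually occur.

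\textbf{Identifying the local model.} Iterating the ``adjoint orbits'' condition in (*) (stabilizer dimensions strictly increase and are bounded by $n-1$), the connected component $F$ of $M^H$ through $x$ contains a $T^{n-1}$-fixed point $x_0$; write $\alpha_1,\dots,\alpha_n \in \Hom(T^{n-1},S^1)$ for the tangent weights there and $\sum_i c_i\alpha_i = 0$ for the relation, all $c_i\neq 0$. Since the normal $H$-representation is locally constant along the connected manifold $F$, the multiset $\{\beta_1,\dots,\beta_s\}$ equals $\{\alpha_i|_H : \alpha_i|_H \neq 0\}$. Assume $j\geq 1$ (free orbits give the trivial local model $\Ro^{n+1}$). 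As the $\alpha_i$ span $\Hom(T^{n-1},S^1)$, not all of them restrict to $0$ on $H$, so $s\geq 1$ and at most $n-1$ of the $\alpha_i$ vanish on $H$. The key point is that $H$ coincides with the common kernel $\bigcap_{\alpha_i|_H = 0}\ker\alpha_i$: the inclusion $\subseteq$ is clear, and conversely that common kernel acts trivially on $T_{x_0}F = \bigoplus_{\alpha_i|_H = 0}V(\alpha_i)$, hence fixes all of the connected $F$, hence fixes $x$, whose stabilizer is exactly $H$. General position now forces the counts to line up: the (at most $n-1$) weights $\alpha_i$ vanishing on $H$ are independent, so $j = \dim H = (n-1) - (n-s) = s-1$, i.e. $s = j+1$; and for any $j$ of the $\beta_i$, independence in $\Hom(T^j,S^1)\cong\Zo^j$ is equivalent to independence of the corresponding $j$ of the $\alpha_i$ together with the $n-j-1$ weights vanishing on $H$ --- altogether $n-1$ of the $\alpha_i$, hence independent. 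So $\beta_0,\dots,\beta_j$ are $j+1$ weights in $\Zo^j$ in general position, with relation $\sum c_i'\beta_i = 0$ (the restriction of $\sum c_i\alpha_i = 0$), all $c_i'\neq 0$.

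\textbf{The local model lemma (the main obstacle).} It remains to prove: if $T^j$ acts on $\Co^{j+1}$ with weights in general position, then $\Co^{j+1}/T^j \cong \Ro^{j+2}$. I would use the explicit invariant-theoretic model: the $T^j$-invariants on $\Co^{j+1}$ are generated by $r_i = |z_i|^2$ and by $w = \prod_{c_i'>0}z_i^{c_i'}\prod_{c_i'<0}\bar z_i^{-c_i'}$ together with $\bar w$, subject to the single relation $|w|^2 = \prod_{i=0}^j r_i^{|c_i'|}$; hence
\[
\Co^{j+1}/T^j \;\cong\; X_j \;:=\; \Bigl\{(r,w)\in\Rg^{\,j+1}\times\Co \;:\; |w|^2 = \textstyle\prod_{i=0}^{j} r_i^{|c_i'|}\Bigr\}.
\]
I would prove $X_m \cong \Ro^{m+2}$ for all $m\geq 0$ by induction, letting $X_m$ denote the model attached to an arbitrary integer vector $(c_0',\dots,c_m')$ with all entries nonzero and $\gcd = 1$ --- the model depends only on this vector, so the induction need not carry the weights. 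The base $m=0$ is immediate ($X_0 \cong \Co$). For the step: $X_m$ carries a scaling $\Rg_{>0}$-action with unique fixed point $0$, realizing it as the open cone over $L_m := (X_m\setminus\{0\})/\Rg_{>0} \cong S^{2m+1}/T^m$. Now $X_m\setminus\{0\}$ is covered by the opens $U_i = X_m\cap\{r_i > 0\}$, and absorbing the positive factor $r_i^{|c_i'|/2}$ into $w$ gives $U_i \cong \Rg_{>0}\times X_{m-1}^{(i)}$, with $X_{m-1}^{(i)}$ the model of the (gcd-reduced) vector $(c_k')_{k\neq i}$; by induction $U_i \cong \Rg_{>0}\times\Ro^{m+1} \cong \Ro^{m+2}$. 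So $X_m\setminus\{0\}$, hence $L_m$, is a closed topological $(m+1)$-manifold, and $L_m$ is simply connected for $m\geq 1$ (a quotient of the simply connected $S^{2m+1}$ by a connected group). A Mayer--Vietoris computation along $\{U_i\}$ --- all intersections $\bigcap_{i\in S}U_i$ with $S\subsetneq\{0,\dots,m\}$ are contractible, while the single top intersection $\bigcap_{i=0}^m U_i \cong \Rg_{>0}^{m+1}\times S^1 \simeq S^1$ --- yields $H_*(X_m\setminus\{0\})\cong H_*(S^{m+1})$. Thus $L_m$ is a simply connected closed homology $(m+1)$-sphere, hence a homotopy sphere, hence $L_m\cong S^{m+1}$ by the topological Poincar\'e conjecture (valid in all dimensions); therefore $X_m = \Cone(L_m) \cong \Ro^{m+2}$, closing the induction.

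Assembling the three steps, every orbit has a neighbourhood in $Q$ homeomorphic to $\Ro^m\times(\Co^{j+1}/T^j) \cong \Ro^m\times\Ro^{j+2} = \Ro^{n+1}$, so $Q$ is a topological manifold of dimension $n+1$. I expect the real work to be concentrated in the local model lemma --- setting up the induction so that the local pieces close up, and carrying out the homology computation feeding into Poincar\'e --- with a second, bookkeeping-heavy subtlety in the previous step, where general position is exactly what forbids higher-complexity local models such as $\Co^3/S^1 = \Cone(\CP^2)$, which would otherwise prevent $Q$ from being a manifold.
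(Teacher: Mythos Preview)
Your proof is correct. Note, however, that the paper does not itself prove this theorem: it is quoted from \cite{A1}, and the paper only proves the non--general--position analogue (Theorem~2) by reducing to Lemma~2.3 and then invoking the general--position local model $\Co^m/T^{m-1}\cong\Ro^{m+1}$ from \cite{A1} as a black box. What you have written is essentially a reconstruction of Ayzenberg's argument in \cite{A1}: the same slice--theorem reduction, the same use of the adjoint--orbits axiom to propagate the slice representation to a genuine fixed point, and the same local model lemma proved by showing that the link $S^{2m+1}/T^m$ is a simply connected homology sphere and then appealing to the topological Poincar\'e conjecture. Your bookkeeping in the ``identifying the local model'' step (showing $s=j+1$ and that the restricted weights remain in general position) is exactly the mechanism that, in the paper's Lemma~2.3, degenerates when some $c_i=0$ and produces the $\Rg^{\,n-m}$ corner factor instead. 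So there is no genuine methodological difference to report; you have filled in precisely the argument the paper defers to the reference.
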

	
	
	In \cite{BT1} and \cite{BT2}, there were considered natural actions of the torus $T^3$ on complex Grassmann manifold $G_{4,2}$ and the torus $T^2$ on the complex full flag manifold $Fl_3$. Using the theory of $(2n,k)$--manifolds, the authors proved that the orbit spaces of the actions are topological spheres, i.e.
	$$G_{4,2} / T^3 \cong S^5,$$
	$$Fl_3 / T^2 \cong S^4.$$
	Assume that the $T$--action on $M$ is not in general position: there exists at least one fixed point $x \in M^T$ for which the weights of tangent representation $\alpha_1, \ldots, \alpha_n$ are not in general position. In the case, the following holds true:
	
	
	
	
	
	
	
	\begin{thm}
		The orbit space $Q = M / T$ is a manifold with corners (i.e. a manifold with boundary).
	\end{thm}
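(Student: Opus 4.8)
The plan is to prove the statement locally, by showing that every orbit $[x]\in Q$ has a neighbourhood homeomorphic to $\Ro^{n+1-m}\times\Ro^{m}_{\geqslant0}$ for a suitable integer $m=m(x)\geqslant0$ (recall $M=M^{2n}$, $T=T^{n-1}$, so $Q$ has dimension $n+1$); since $Q$ is compact and Hausdorff this shows at once that $Q$ is a topological manifold with corners, and as $\Ro^{n+1-m}\times\Ro^{m}_{\geqslant0}$ is homeomorphic to a closed half--space when $m\geqslant1$, such a space is in particular a manifold with boundary. Fix $x\in M$ and put $H=T_x$, a subtorus of $T$ by conditions (*), of dimension $j$. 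By the slice theorem a $T$--invariant neighbourhood of the orbit $Tx$ is equivariantly homeomorphic to $T\times_H W$ for the slice $H$--representation $W$, hence a neighbourhood of $[x]$ in $Q$ is $W/H$. Writing $W=W^H\oplus W'$ with $W^H\cong\Ro^{r}$ the trivial part and $W'\cong_{\Ro}\Co^{b}$ a sum of non--trivial weight spaces, we get $W/H\cong\Ro^{r}\times(\Co^{b}/H)$, so the problem reduces to the quotient of the effective linear $T^{j}$--action on $\Co^{b}$ with non--zero weights $\beta_1,\dots,\beta_b$.

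The crucial point is that this residual action has complexity at most $1$, i.e.\ $b-j\leqslant1$. Let $N\subseteq M$ be the connected component containing $x$ of the set of points fixed by $H$; then $Tx\subseteq N$, the slice of $Tx$ inside $N$ is precisely $W^H$, so $\dim N=(n-1-j)+r$, and the residual torus $T/H\cong T^{n-1-j}$ acts on $N$. This action is effective, since anything acting trivially on $N$ fixes $x$, hence lies in $H$. On the other hand, iterating the ``adjoint orbits'' condition one finds inside $\overline{V}_{H,i}\subseteq N$ a $T$--fixed point $p$; since $p$ is isolated in $M^{T}$, the space $(T_pN)^{T/H}=(T_pM)^{T}$ is zero, so $T_pN$ is a sum of $\tfrac12\dim N$ non--trivial complex $T^{n-1-j}$--modules whose weights, by effectiveness, span $\Qo^{n-1-j}$; hence $\tfrac12\dim N\geqslant n-1-j$, i.e.\ $r\geqslant n-1-j$. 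Since $r+2b=\dim_{\Ro}W=n+1+j$, this gives $b-j=\tfrac12(n+1-j-r)\leqslant1$.

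It remains to identify $\Co^{b}/T^{j}$ in the two cases. If $b=j$, the residual action is a faithful $T^{j}$--representation and $\Co^{j}/T^{j}\cong\Ro^{j}_{\geqslant0}$, so $W/H\cong\Ro^{r}\times\Ro^{j}_{\geqslant0}$ is a corner chart. If $b=j+1$, there is a primitive relation $c_1\beta_1+\dots+c_{j+1}\beta_{j+1}=0$; set $I=\{i:c_i=0\}$ and $m=|I|$, and note that the non--vanishing of the $\beta_i$ forces $m\leqslant j-1$. Let $T_I=\bigcap_{i\notin I}\Ker\beta_i$ and $T_{I^{c}}=\bigcap_{i\in I}\Ker\beta_i$. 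A rank count using primitivity of $c$ shows $\dim T_I=m$, $\dim T_{I^{c}}=j-m$, and that the multiplication map $T_I\times T_{I^{c}}\to T^{j}$ is surjective, so $T_I$ and $T_{I^{c}}$ have the same orbits on $\Co^{j+1}$ as $T^{j}$ does. Since $T_I$ acts trivially on $\Co^{I^{c}}$ and through $m$ linearly independent weights on $\Co^{I}$, while $T_{I^{c}}$ acts trivially on $\Co^{I}$ and through the $j+1-m$ weights $\beta_i$ ($i\notin I$), which are in general position, on $\Co^{I^{c}}$, we obtain
\[
\Co^{j+1}/T^{j}\;\cong\;(\Co^{I}/T_I)\times(\Co^{I^{c}}/T_{I^{c}})\;\cong\;\Ro^{m}_{\geqslant0}\times\Ro^{j+2-m},
\]
the last factor being a manifold by \cite{A1} applied to this general--position linear model. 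Thus $W/H\cong\Ro^{n+1-m}\times\Ro^{m}_{\geqslant0}$, again a corner chart, which proves that $Q$ is a manifold with corners. Finally, because the action is not in general position, some fixed point $x$ (there $H=T^{n-1}$, $r=0$, $b=n$) has $m\geqslant1$, so $[x]\in\partial Q$ and the boundary is non--empty.

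I expect the main obstacle to be the complexity estimate $b-j\leqslant1$ for the slice representation: it is what keeps the whole local analysis within reach of \cite{A1}, and it is exactly where all three of the conditions (*) are used --- isolated fixed points and connected stabilizers to pin down $T_pN$, and the adjoint--orbits condition to guarantee that the minimal invariant submanifold $N$ carrying the residual action has a fixed point at all. A secondary technical step is the decomposition $\Co^{j+1}/T^{j}\cong(\Co^{I}/T_I)\times(\Co^{I^{c}}/T_{I^{c}})$, where one must check that the complementary subtori $T_I$ and $T_{I^{c}}$ jointly surject onto $T^{j}$, so that passing from $T^{j}$ to the subgroup they generate does not alter the orbit space.
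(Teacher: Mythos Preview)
Your proof is correct and the core local computation---splitting $\Co^{j+1}/T^{j}$ into a general--position factor $\Co^{I^{c}}/T_{I^{c}}\cong\Ro^{j+2-m}$ and a standard factor $\Co^{I}/T_{I}\cong\Ro^{m}_{\geqslant0}$ according to which coefficients $c_i$ vanish---is exactly the paper's Lemma~2.3. Where you differ is in the treatment of non--fixed points. The paper proves Lemma~2.3 only at fixed points and then argues by induction on $\dim H$, using the adjoint--orbits condition to pass from $x$ to a nearby $x'$ with strictly larger stabilizer and claiming that the $H$--slice representations agree along $\overline{V}_{H,i}$; the induction bottoms out at a $T$--fixed point. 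You instead apply the slice theorem directly at an arbitrary $x$ and use the adjoint--orbits condition once, to produce a $T$--fixed point $p$ inside $N=(M^{H})_x$ and thereby bound $\dim N$ from below; this yields the complexity estimate $b-j\leqslant1$ for the slice representation, after which the same decomposition applies uniformly. Your route is slightly more self--contained: it makes explicit why the slice at a non--fixed point is still a complexity $\leqslant 1$ model (something the paper's inductive step states but does not spell out), and it gives in one stroke the two cases $b=j$ (half--dimensional, pure corner) and $b=j+1$ (the Lemma~2.3 situation). One small point you leave implicit: in the case $b=j$ and in identifying $\Co^{I}/T_{I}\cong\Ro^{m}_{\geqslant0}$, you need the relevant weights to form a lattice basis, not merely a rational basis; this is where the connected--stabilizers hypothesis from $(*)$ enters, since it forces $\bigcap_i\Ker\beta_i=\{1\}$ at the group level.
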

	
	\begin{proof}
		Following the proof for actions in general position (see \cite{A1}), we  prove the local statement.
		\begin{lem}
			Let $x \in M^T$ be a fixed point for which the weights $\alpha_i$ of the tangent representation are not in general position. Then there exists a neighborhood $U$, $Tx \in U \subset Q$ such that $U \cong \mathbb{R}^{m+1} \times \mathbb{R_{\geq}}^{n-m}$.
		\end{lem}
		
		\begin{proof}
			Since the point $x \in M$ possesses a neighborhood equivariantly \linebreak  homeomorphic to $\mathbb{C}^n$, it is sufficient to prove the statement for a subtorus $T = T^{n-1} \subset G = T^{n}$ where $G$ acts on $\mathbb{C}^n$ standardly. Denote $U = \mathbb{C}^n / T^{n-1}$; we will show that $U$ is the required neighborhood of $Tx \in Q$.
			
			There exist $c_i \in \mathbb{Z}$ such that $c_1 \alpha_1 + \ldots + c_n \alpha_n = 0$ and gcd$(c_1, \ldots, c_n) = 1$; since the action is not in general position, $c_i = 0$ for some $i$. Without loss of generality, we can assume that $c_i \neq 0$ for $i \in \{1, \ldots, m\}$ and $c_i = 0$ otherwise.
			
			We have
			$$c_1 \alpha_1 + \ldots + c_m \alpha_m = 0 $$
			where each $c_i \neq 0$. The action of the subtorus given by the following equation
			\begin{equation}
			t_1^{c_1} \cdot \ldots \cdot t_m^{c_m} = 1
			\end{equation}
			coincides with the action of the torus $T$ on $\mathbb{C}^n$. We have the following diagram connecting the actions of $T$ and $G$:
			\begin{center}
				\begin{tikzpicture}[scale=1.0]
				
				\coordinate (A1) at (-1,0);
				\coordinate (A2) at (1,-0.07);						
				\coordinate (A3) at (0, -1);
				\coordinate (B1) at (0,0.2);
				\coordinate (B2) at (-0.725,-0.725);
				
				\draw [thick,latex-latex] [->](-0.8,-0.05)--(0.7,-0.05) node[right]{};
				\draw [thick,latex-latex] [->](-0.85,-0.15)--(-0.15,-0.85) node[right]{};
				\draw[densely dashed] [thick,latex-latex] [->](0.15,-0.85)--(0.85,-0.15) node[right]{};
				
				\node at (barycentric cs:A1=1) {$\mathbb{C}^n$};
				\node at (barycentric cs:A2=1) {$\mathbb{R}_{\geq}^n$};
				
				\node at (barycentric cs:A3=1)  {$U$};						
				\node at (barycentric cs:B1=1) {\small $/G$};
				\node at (barycentric cs:B2=1) {\small $/T$};
				
				\end{tikzpicture}
			\end{center}
			where the dashed line denotes taking quotient of $U$ by $G/T = T^1 = S^1$.
			
			Now we note that $T$ contains exactly $n-m$ coordinate circles since equation (2.1) doesn't contain the variables $t_{m+1}, \ldots, t_n$. Therefore, the action of $T$ decomposes into the product of actions on $\mathbb{C}^n = \mathbb{C}^m \times \mathbb{C}^{n-m}$ where the action on the first factor is in general position and the action on the second factor is standard. Thus, we have $U = U' \times \mathbb{R}_{\geq}^{n-m}$ where $U'$ is the orbit space of a $T^{m-1}$-action on $\mathbb{C}^m$ in general position. Therefore, $U' \cong \mathbb{R}^{m+1}$ and $U \cong \mathbb{R}^{m+1} \times \mathbb{R}_{\geq}^{n-m}$. $\hbox{}$
		\end{proof}
		
		Now that we already know there is a neighborhood $U$ of the point $Tx \in Q$ homeomorphic to the half-space $H^{n+1} = \mathbb{R}^n \times \mathbb{R}_{\geq}$, we finish the proof as in the case of actions in general positions. Due to adjoint orbits condition, given a point $x \in V_{H,i}$, there is a point $x' \in \overline{V}_{H,i}$ such that $\dim St(x') > \dim St(x) = \dim H$. For both $x$ and $x'$, the local representation of the stabilizer subgroup $St(x)$ in the normal space $T_x M / T_x Tx$ coincide; therefore, we can prove the theorem by induction on the dimension of stabilizer subgroup $\dim H$.
		
		Note, however, that there exists such element $V_{T_1, i}$ where $T_1$ is the torus from the proof of Lemma 2.3 which acts standardly on $\mathbb{C}^{n-m}$. The tangent space of $V_{T_1, i}$ at the corresponding fixed point $x$ is the space $V(\alpha_1) \oplus \ldots \oplus V(\alpha_m)$. It follows from the proof of Lemma 2.3 that the boundary $\partial Q$ locally looks like $X/T \times \mathbb{R}^{n-m-1}$.
		
		
	\end{proof}
	
	Let us make the following remarks. According to Lemma 2.3, if the weight of the tangent representation at $x \in M^T$ are not in general position, then the neighborhood of $Tx$ in the orbit space $Q$ is the corner $\mathbb{R}^{m+1} \times \mathbb{R}_{\geq}^{n-m}$ where $m$ is determined by the weights $\alpha_i$. Therefore, the point $Tx$ belongs to the boundary $\partial Q$. Nonetheless, for actions not in general position, there might be fixed points with weights in general position. For such points, the neighborhood $U \cong \mathbb{R}^{n+1}$, i.e. $Tx$ is an interior point.
	
	Moreover, we might conclude from the structure of the corner $\mathbb{R}^{m+1} \times \mathbb{R}_{\geq}^{n-m}$ which elements $V_{H, i}$ are mapped to the boundary $\partial Q$. The torus $T$ in a local chart acts as $T_0 \times T_1$ on $\mathbb{C}^n = \mathbb{C}^m \times \mathbb{C}^{n-m}$ where the action of $T_1$ on $\mathbb{C}^{n-m}$ is standard. Therefore, $V_i / T \subset \partial Q$ if and only if $H \cap T_1 \neq 0$ (see an example after Remark 3.4). 
	
	Note that the orbit space $Q$ possesses the structure of a manifold with corners. For a point $y \in \partial Q$, its neighborhood is a corner $\mathbb{R}^{m+1} \times \mathbb{R}_{\geq}^{n-m}$ for some $m$ depending on $y$. However, the least possible $m$ is equal to 2 (or 3, say, if $M$ is a GKM-manifold); therefore, the boundary $\partial Q$ doesn't have maximal corners such as $\mathbb{R}_{\geq}^{n+1}$, $\mathbb{R} \times \mathbb{R}_{\geq}^{n}$ и $\mathbb{R}^2 \times \mathbb{R}_{\geq}^{n-1}$ (and $\mathbb{R}^3 \times \mathbb{R}_{\geq}^{n-2}$ in case of GKM-manifolds).
	
	\section{Manifolds of isospectral matrices}
	
	Denote by $\Mat_{n \times n} (\mathbb{C})$ the vector space of all complex $n \times n$ matrices.
	
	Let $\Herm(\Lambda) \subset \Mat_{n \times n} (\mathbb{C})$ be the submanifold of Hermitian matrices with fixed simple spectrum: $\Spec A = (\lambda_1, \ldots, \lambda_n)$ where $A \in \Herm(\Lambda), \Lambda = (\lambda_1, \ldots, \lambda_n)$ and $ \lambda_i \neq \lambda_j$ for $i \neq j$. It is defined by the following system of equations:
	$$
	\begin{cases} 
	tr(A) = \sum_i \lambda_i, \\ 
	tr(A^2) = \sum_i \lambda_i^2, \\ 
	... \\
	tr(A^n) = \sum_i \lambda_i^n, \\
	A^T = \overline{A};
	\end{cases}
	$$
	--- where first $n$ equations determine $\Spec(A)$. In fact, $\Herm(\Lambda)$ is diffeomorphic to the complex full flag manifold $Fl_n$ since it is the quotient of a free action of a maximal torus $T^n$ on the unitary group $U(n)$.
	
	
	There is a natural torus action on $\Herm(\Lambda)$: for $t = (t_1, \ldots, t_n) \in T^n$ and $A \in \Herm(\Lambda)$, the action is defined by $A \xrightarrow{t} T A T^{-1}$ where $T = \diag(t_1, \ldots, t_n)$. The action of the diagonal subtorus $\Delta(T^n)$ is trivial; we have an effective action of $T^{n-1} = T^n / \Delta(T^n)$.
	
	In order to define manifolds of isospectral Hermitian matrices of staircase form $M_h$, we need to give the following definition.
	
	\begin{defin} A function $h: [n] \to [n]$ is called a Hessenberg function if it satisfies the following conditions:
		
		$\bullet$ $h(i) \geq i$ for $i = 1, \ldots, n$;
		
		$\bullet$ $h(i+1) \geq h(i)$ for $i = 1, \ldots, n-1$.
	\end{defin}
	
	For a fixed Hessenberg function $h$, we define $M_h \subset \Herm(\Lambda)$ by the equations: $a_{ij} = 0$ for $j > h(i)$ (note that since we consider Hermitian matrices, $a_{ij} = 0$ implies $a_{ji} = 0$).
	
	\begin{ex} For $h = (2, 3, 3, 6, 6, 6)$, the manifold $M_h$ consists of matrices of the following form:
		$$\begin{pmatrix} * & * & 0 & 0 & 0 & 0\\ * & * & * & 0 & 0 & 0\\ 0 & * & * & 0 & 0 & 0\\ 0 & 0 & 0 & * & * & *\\0 & 0 & 0 & * & * & *\\0 & 0 & 0 & * & * & * \end{pmatrix} 
		$$
	\end{ex}
	
	In \cite{AB1}, it was shown that for any Hessenberg function $h$ the corresponding manifold $M_h$ is smooth and its dimension $\dim_{\mathbb{R}} M_h = \sum_{i} 2 (h(i) - i)$. Since equations $a_{ij} = 0$ are invariant under the action of $T^{n-1}$ on $\Herm(\Lambda)$, we have a natural $T^{n-1}$-action on $M_h$. Its complexity $d = \sum_{i} (h(i) - i) - n$. The fixed points of the action are diagonal matrices with spectrum $\Lambda$, i.e. the matrices of the form $\diag(\lambda_{\sigma(1)}, \ldots, \lambda_{\sigma(n)})$ for all possible permutations $\sigma$.
	
	For $h = (2, 3, \ldots, n-1, n, n)$, $M_h$ is the manifold of Hermitian tridiagonal isospectral matrices; its real part is the Tomei manifold \cite{T}. In this case, the action is half-dimensional and $M_h$ is a quasitoric manifold over the permutohedron (see \cite{DJ}). 
	
	We consider manifolds $M_h$ with $d = 1$. Moreover, we consider only irreducible manifolds $M_h$, i.e. $h(i) > i$. This is possible only if  $h(i) = i+1$ for any $i$ with the exception of $i_0 \in \{1, \ldots, n-2 \}$ for which $h(i_0) = i_0+2$. In the case, the manifold $M_h$ consists of "almost tridiagonal" matrices -- that is, there is only one non-zero element over tridiagonal part, namely $a_{i_0 , i_0+2}$.
	
	We now prove that if $n \geq 4$ then the torus action on $M_h$ is not in general position. Indeed, let us take a fixed point $\diag(\lambda_{\sigma(1)}, \ldots, \lambda_{\sigma(n)})$ where $\sigma$ is a permutation. It is easy to show that the weights $\alpha_{ij}$ are of the form $e_i - e_j$ for $i < j$ such that $j \leq h(i)$ ($e_k$ are standard basis vectors). There exists $i_0$ such that $a_{i_0 , i_0+2} \neq 0$; therefore, $a_{i_0 , i_0+1} \neq 0$ and $a_{i_0+1 , i_0+2} \neq 0$ since $h$ is a Hessenberg function. Thus, there is a relation between the corresponding weights:	$$- \alpha_{i_0 , i_0+2} + \alpha_{i_0 , i_0+1} + \alpha_{i_0+1 , i_0+2} = 0.$$
	
	Hence, the weights are in general position if and only if $\alpha_{i_0 , i_0+2}, \alpha_{i_0 , i_0+1}, \alpha_{i_0+1 , i_0+2}$ are the only weights of tangent representation. This corresponds to the case $n = 3$ and $M_h = Herm(\Lambda) \cong Fl_3$. In the case, weights are in general position and the orbit space is homeomorphic to the sphere $S^4$. We proved the following proposition.
	
	\begin{prop}
		For $n \geq 4$, the action $T^{n-1} \curvearrowright M_h$ is not in general position: for any fixed point, the weights are not in general position. 
	\end{prop}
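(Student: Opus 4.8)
The plan is simply to make precise the weight computation sketched in the paragraph just before the statement; everything is a short calculation, and the only point needing care is the bookkeeping in Step~1.

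\textbf{Step 1 (weights at a fixed point).} The fixed points are the diagonal matrices $\diag(\lambda_{\sigma(1)},\dots,\lambda_{\sigma(n)})$. Linearizing $A\mapsto TAT^{-1}$ at such a matrix, the tangent space of $M_h$ is spanned by the free strictly-upper-triangular coordinates $a_{ij}$, i.e.\ those with $i<j\le h(i)$, on which the torus acts by $a_{ij}\mapsto t_it_j^{-1}a_{ij}$; hence the weight carried by $a_{ij}$ is $\alpha_{ij}=e_i-e_j$, read inside $\Hom(T^{n-1},S^1)\cong\{v\in\Zo^n:\sum_k v_k=0\}$. Passing to the effective quotient $T^{n-1}=T^n/\Delta(T^n)$ introduces no twisting, precisely because $e_i-e_j$ already has zero coordinate sum. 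This weight multiset $\{\alpha_{ij}:i<j\le h(i)\}$ is the same at every fixed point, so it suffices to treat one of them; and for an irreducible $M_h$ with $d=1$ --- whose matrices are tridiagonal apart from the single extra entry $a_{i_0,i_0+2}$ --- it has exactly $n$ elements, namely the $n-1$ superdiagonal weights $\alpha_{i,i+1}$ together with $\alpha_{i_0,i_0+2}$, and no two of these are proportional.

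\textbf{Step 2 (a dependence among three weights).} Here $h(i_0)=i_0+2$ for some $i_0\in\{1,\dots,n-2\}$, and since $h$ is nondecreasing $h(i_0+1)\ge h(i_0)=i_0+2$, so the coordinates $a_{i_0,i_0+1}$, $a_{i_0+1,i_0+2}$, $a_{i_0,i_0+2}$ are all free. Their weights are pairwise distinct and satisfy
$$\alpha_{i_0,i_0+1}+\alpha_{i_0+1,i_0+2}-\alpha_{i_0,i_0+2}=(e_{i_0}-e_{i_0+1})+(e_{i_0+1}-e_{i_0+2})-(e_{i_0}-e_{i_0+2})=0,$$
a linear relation with all three coefficients nonzero, so these weights span only a $2$-plane.

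\textbf{Step 3 (conclusion).} The $n$ weights lie in an $(n-1)$-dimensional lattice, so they are in general position if and only if every $(n-1)$-element subset is linearly independent. By Step~2 and the fact that no two weights are proportional, the smallest linearly dependent subset of the $n$ weights has exactly three elements, so general position fails as soon as $3\le n-1$, i.e.\ for $n\ge4$: when $n=4$ the three weights of Step~2 already form a dependent $(n-1)$-subset, while for $n\ge5$ one enlarges them by any $n-4$ of the remaining $n-3$ weights to a dependent $(n-1)$-subset. (Equivalently, effectiveness of the action forces the $n$ weights to span $\Qo^{n-1}$, so their space of relations is one-dimensional and the relation displayed above --- whose coefficients have gcd $1$ --- is the normalized one; for $n\ge4$ it has a vanishing coefficient, which is exactly the failure of general position, whereas for $n=3$, where $M_h\cong Fl_3$ and the only weights are $e_1-e_2$, $e_2-e_3$, $e_1-e_3$, it does not.) Thus the whole difficulty is concentrated in the elementary count of Step~1 --- that there are precisely $n$ weights, no two proportional --- since this is what both yields the hypothesis $n\ge4$ and explains the exceptional behavior at $n=3$.
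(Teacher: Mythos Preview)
Your proof is correct and follows essentially the same approach as the paper: compute the weights as $e_i-e_j$ for $i<j\le h(i)$, exhibit the three-term relation among $\alpha_{i_0,i_0+1}$, $\alpha_{i_0+1,i_0+2}$, $\alpha_{i_0,i_0+2}$, and observe that for $n\ge 4$ this forces a vanishing coefficient in the unique normalized relation (equivalently, a dependent $(n-1)$-subset). You are more explicit than the paper about the bookkeeping in Step~1 and the logic of Step~3, but the argument is the same.
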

	
	Therefore, the orbit space $M_h / T$ is a manifold with boundary. It is easy to prove the following generalized statement concerning arbitrary Hessenberg functions.
	
	\begin{rem}
		For $n \geq 4$ and a Hessenberg function $h$, the action $T^{n-1} \curvearrowright M_h$ is in general position if and only if $h(i) = i+1$ for $i \in \{1, \ldots, n-1\}$ (i.e. $M_h$ is the submanifold of tridiagonal matrices).
	\end{rem}
	
	Let us consider the case $h = (3, 3, 4, 4)$. The manifold $M_h$ is the manifold of isospectral Hermitian matrices of the form:
	$$ \begin{pmatrix}	a_1 & b_{12} & b_{13} & 0 \\ 
	\overline{b}_{12} & a_2 & b_{23} & 0 \\ 
	\overline{b}_{13} & \overline{b}_{23} & a_3 & b_{34} \\ 
	0 & 0 & \overline{b}_{34} & a_4 \end{pmatrix} $$
	Lew us show that the boundary of the orbit space $\partial(M_h/T) \cong \sqcup_{i = 1}^4 S^4$.
	
	Indeed, for any fixed point, there are four weights $\alpha_{12}, \alpha_{13}, \alpha_{23}, \alpha_{34}$. Since there is a linear relation on $\alpha_{12}, \alpha_{13}, \alpha_{23}$, it follows from Theorem 2.2 that a neighborhood of a boundary point in $\partial Q$ looks like $\mathbb{R}^4 \times \mathbb{R}_{\geq}^1$ (in this case, the structure of a manifold with corners is that of a manifold with boundary).
	
	Locally, the action is modeled as the action $T^2 \times T^1 \curvearrowright \mathbb{C}^3 \times \mathbb{C}^1$ where $T^2  \curvearrowright \mathbb{C}^3$ is in general position and $T^1 \curvearrowright \mathbb{C}^1$ is standard. Therefore the condition $St(V_{G, i}) \cap T^1 \neq 0$ defines for which elements $V_{G, i}$ their orbits are boundary points. It is easy to see that such elements are determined by $b_{34} = 0$, i.e. matrices of the form:
	$$ \begin{pmatrix}	a_1 & b_{12} & b_{13} & 0 \\ 
	\overline{b}_{12} & a_2 & b_{23} & 0 \\ 
	\overline{b}_{13} & \overline{b}_{23} & a_3 & 0 \\ 
	0 & 0 & 0 & \lambda_i \end{pmatrix} $$
	--- where $\lambda_i$ is an eigenvalue. We have four elements $V_{T^1,i}$ $i = 1, \ldots, 4$ (for each possible $\lambda_i$). Note that each $V_{T^1,i}$ is equivariantly diffeomorphic to the complex full flag manifold $Fl_3$; therefore, $V_i/T \cong Fl_3/T \cong S^4$. Since $V_{T^1,i}$ are disjoint, the boundary $\partial(M_h/T)$ is homeomorphic to the disjoint sum of four spheres $S^4$.
	
	In Section 5, we will show that the orbit space $M_h/T$ is homeomorphic to $S^5 \setminus \nolinebreak (\bigsqcup_{i=1}^4 D^5)$, i.e. the complement of four disjoint open 5-disks in the 5-sphere. In order to do that, we will have to consider Hessenberg varieties and Hamiltonian torus actions.
	
	\section{Hamiltonian actions of complexity 1}
	
	Consider an action of $T = T^k$ on a smooth manifold $M = M^{2n}$ with a symplectic form $\omega$ (see, for example, \cite{ET}). We suppose the action is Hamiltonian, i.e. there is a moment map $\mu: M \to \mathfrak{t}^* \cong \mathbb{R}^k$ and the following equations hold:
	$$d \mu^\eta = - \iota(\eta_M) \omega$$
	where $\mathfrak{t}$ is the Lie algebra of $T^k$; for $\eta \in \mathfrak{t}$, $\eta_M$ is the generated vector field $\dfrac{d}{dt}|_{t=0} \ exp(t \eta)$; $\mu^\eta (x) =  \langle\mu(x), \eta\rangle$ (the pairing between $\mathfrak{t}$ and $\mathfrak{t}^*$) and $\iota$ is the contraction of $\omega$ with a vector field. For complexity $1$ Hamiltonian actions, the following theorem was proved in \cite{KT}:
	
	\begin{thm}
		Suppose $T^{n-1} \curvearrowright M^{2n}$ is a Hamiltonian action in general position; then the orbit space $M^{2n} / T^{n-1}$ is homeomorphic to the sphere $S^{n+1}$.
	\end{thm}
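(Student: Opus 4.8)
The plan is to follow the strategy of Karshon and Tolman~\cite{KT}, using the moment map in place of the combinatorial input behind Theorem~2.2. First I would invoke the Atiyah--Guillemin--Sternberg convexity theorem: the moment image $P := \mu(M^{2n}) \subset \mathfrak{t}^* \cong \mathbb{R}^{n-1}$ is a convex polytope, whose vertices are among the images of the isolated fixed points, and which is homeomorphic to $D^{n-1}$. The moment map descends to a continuous surjection $\bar\mu \colon Q = M^{2n}/T^{n-1} \to P$, and the whole argument consists of understanding $\bar\mu$ fibrewise and then reassembling $Q$ from its restrictions over the interior and over a collar of $\partial P$.

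The core local step is to show that for every $\xi$ in the interior of $P$ the fibre $\bar\mu^{-1}(\xi) = \mu^{-1}(\xi)/T^{n-1}$ is homeomorphic to $S^2$. When $\xi$ is a regular value, this fibre is the symplectic reduction, a closed symplectic $2$--orbifold; the general position hypothesis on the tangent weights is precisely what forbids orbifold points and higher genus, leaving $S^2$ (in particular it rules out isolated fixed points mapping into $\Int(P)$). When $\xi$ lies on a wall one uses the local normal form near the corresponding orbit-type stratum together with the complexity-one wall-crossing analysis of~\cite{KT} to see that the fibre is still $S^2$ and, more precisely, that $\bar\mu$ restricts to a locally trivial $S^2$--bundle over $\Int(P)$. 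Since $\Int(P) \cong \mathbb{R}^{n-1}$ is contractible, this bundle is trivial, so $\bar\mu^{-1}(\Int P) \cong S^2 \times \mathbb{R}^{n-1}$.

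Next I would show that over each point $\xi \in \partial P$ the fibre $\bar\mu^{-1}(\xi)$ degenerates to a single point. The cleanest argument uses Theorem~2.1: in general position $Q$ is a closed topological manifold, so the preimage of a half-ball neighbourhood of any boundary point must be a manifold \emph{without} boundary; if the $S^2$--fibre did not collapse there, this preimage would be $S^2 \times \mathbb{R}^{n-2} \times \mathbb{R}_{\geq 0}$, which has boundary---a contradiction. (Equivalently: the face submanifolds of $M^{2n}$ lying over the faces of $P$ must be complexity-zero, i.e.\ toric, for the same reason, and for a toric submanifold the reduced fibres are points.) Consequently, writing a collar of $\partial P$ in $P$ as $S^{n-2} \times [0,1]$ with $S^{n-2} \times \{0\} = \partial P$, the preimage of this collar is $\bigl(S^2 \times S^{n-2} \times (0,1]\bigr) \cup \bigl(S^{n-2} \times \{0\}\bigr)$ with each sphere over $S^{n-2}\times\{0\}$ crushed to a point, that is, $(\Cone S^2) \times S^{n-2} \cong D^3 \times S^{n-2}$.

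Finally I would assemble $Q$: writing $P \cong D^{n-1}$ as a smaller disc $D^{n-1}$ glued to the collar above, and using the triviality of $\bar\mu$ over $\Int(P)$ from the second step (so that the identification of $\bar\mu^{-1}$ over the gluing sphere is product-like on both sides, hence isotopic to the identity), we get
$$
Q \;\cong\; (S^2 \times D^{n-1}) \,\cup_{\,S^2 \times S^{n-2}}\, (D^3 \times S^{n-2}) \;=\; \partial\!\left(D^3 \times D^{n-1}\right) \;=\; \partial D^{n+2} \;=\; S^{n+1},
$$
which for $n = 2$ is just the unreduced suspension $\Sigma S^2 = S^3$. The main obstacle is the second step: proving that every interior reduced space is genuinely a $2$--sphere and that $\bar\mu$ is locally trivial across the walls of $P$. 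This is where the general position hypothesis does its essential work, and it is the point at which the full strength of the Karshon--Tolman machinery (local normal forms, the structure theory of complexity-one reduced spaces, Duistermaat--Heckman wall-crossing) is needed; pinning down the precise form of the collapse over $\partial P$, while intuitively clear, likewise requires some care with the local structure at the fixed points of the subcircles whose fixed-point sets sit over the facets of $P$.
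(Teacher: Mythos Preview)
Your proposal is correct and follows essentially the same route as the paper. Note that the paper does not actually prove this statement itself but cites it from Karshon--Tolman~\cite{KT}; the relevant argument appears only as a sketch inside the proof of Theorem~4.2, where the orbit space is described as the model $(P\times S^2)/\!\sim$ with the sphere fibres collapsed over $\partial P$, yielding $S^{n+1}$. Your explicit decomposition $Q\cong(S^2\times D^{n-1})\cup_{S^2\times S^{n-2}}(D^3\times S^{n-2})=\partial(D^3\times D^{n-1})$ is precisely this model written out.

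One minor remark: your appeal to Theorem~2.1 to force the boundary fibres to collapse is logically sound but somewhat roundabout---the paper (following~\cite{KT}) argues directly that the face submanifolds over $\partial P$ carry half-dimensional torus actions, hence have singleton reduced fibres, which is exactly the parenthetical alternative you mention and is the cleaner route here.
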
 
	
	The actions $T^3 \curvearrowright G_{4, 2}$ and $T^2 \curvearrowright Fl_3$ studied in \cite{BT1}, \cite{BT2}, \cite{BT3} are examples of Hamiltonian actions in general position. In Section 5, we will consider Hessenberg varieties which possess Hamiltonian torus actions not in general position. For such actions, we prove the theorem.
	
	\begin{thm}  
		Suppose $T^{n-1} \curvearrowright M^{2n}$ is a Hamiltonian action in general position; then the orbit space $Q = M^{2n} / T^{n-1}$ is homeomorphic to $S^{n+1} \setminus (U_1 \sqcup \ldots \sqcup U_l)$, i.e. the complement of disjoint open domains $U_i$ in the (n+1)-sphere.
	\end{thm}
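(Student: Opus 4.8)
The plan is to realise $Q$ as the complement of finitely many disjoint open domains in $S^{n+1}$ by \emph{capping off its boundary}, the geometric input coming from the moment map. By Theorem 2.2 the space $Q$ is a compact $(n+1)$--dimensional manifold with boundary, so it suffices to build a closed $(n+1)$--manifold $\widehat{Q}\supset Q$ such that $\widehat{Q}\setminus Q$ is a disjoint union of open domains $U_1,\dots,U_l$ and $\widehat{Q}\cong S^{n+1}$; then $Q\cong S^{n+1}\setminus(U_1\sqcup\cdots\sqcup U_l)$.

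First I would describe $\partial Q$ in terms of lower--dimensional quotients. By the corner analysis of Lemma 2.3 and the criterion stated after the proof of Theorem 2.2, the orbits lying on $\partial Q$ are the images of the partition elements $V_{H,i}$ with $H\cap T_1\neq 0$; globally, the closures of these elements are connected components $W$ of the fixed submanifolds $M^{T_W}$ of suitable subcircles $T_W\subset T$. Each such $W$ is a closed symplectic submanifold of $M$ of real dimension $2(n-1)$, it carries the Hamiltonian action of $T^{n-1}/T_W\cong T^{n-2}$ obtained by restricting and pushing down $\mu$, and this action has complexity $1$. By Lemma 2.3 the induced action on $W$ is in general position precisely at those fixed points where a single coefficient $c_i$ vanishes; invoking Theorem 4.1 in that case, and the present theorem for $M^{2(n-1)}$ (induction on $n$: the hypothesis is vacuous for $n\le 2$, and for $n=3$ the induced $T^1$--actions on $M^4$ are automatically in general position so Theorem 4.1 applies directly) in the remaining cases, one gets that each $W/T$ is homeomorphic to $S^{n}$, respectively to $S^{n}$ with finitely many disjoint open domains removed. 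These quotients are glued to one another along their boundaries --- the images of the corners of $Q$ --- to assemble the closed $n$--manifold $\partial Q$, and one checks that along each connected component of $\partial Q$ this gluing fills in all the removed domains, so that every connected component of $\partial Q$ is homeomorphic to $S^{n}$. I then form $\widehat{Q}$ by gluing a copy of $D^{n+1}$ onto each component of $\partial Q$; it is a closed topological $(n+1)$--manifold, and $\widehat{Q}\setminus Q$ is a disjoint union of open disks.

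It remains to identify $\widehat{Q}$ with $S^{n+1}$, and here I would extend the argument of Karshon and Tolman. The moment map induces $\bar\mu\colon Q\to\Delta=\mu(M)\subset\mathfrak{t}^*$, an $(n-1)$--dimensional polytope whose interior fibres $\mu^{-1}(\xi)/T$ are $2$--spheres (the fixed points are isolated, so no reduced space has positive genus), and Karshon and Tolman reconstruct $S^{n+1}$ from this family of reduced spaces together with its wall data for a general--position action. For our action the only discrepancy with a general--position action is localised over the subpolytopes $\mu(W)\subset\Delta$, where by Lemma 2.3 the quotient acquires a corner $\mathbb{R}^{m+1}\times\mathbb{R}_{\geq}^{n-m}$ in place of a half--space; gluing $D^{n+1}$ along $W/T\cong S^{n}$ reverses precisely this local modification, so $\widehat{Q}$ carries exactly the Karshon--Tolman model of a general--position action and is therefore homeomorphic to $S^{n+1}$. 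Equivalently, one reads off from the same picture that $\widehat{Q}$ is a simply connected closed $(n+1)$--manifold with the integral homology of $S^{n+1}$, via Mayer--Vietoris for $Q$ and the caps, and invokes the topological Poincar\'e conjecture. Setting $U_i=\operatorname{int}D^{n+1}_i$ then gives $Q\cong S^{n+1}\setminus(U_1\sqcup\cdots\sqcup U_l)$.

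The main obstacle is the bookkeeping in the middle paragraph: making precise how the boundary strata $W$ meet along the deeper corners $\mathbb{R}^{m+1}\times\mathbb{R}_{\geq}^{n-m}$ with $n-m\ge 2$, and proving that the pieces $W/T$ --- which by the inductive hypothesis may themselves be spheres with open domains removed --- glue up into genuine spheres $S^{n}$, so that $\widehat{Q}$ is honestly a closed manifold whose Karshon--Tolman data (or homology) coincides with that of a general--position action. The identification of the boundary strata with fixed submanifolds of subcircles and the complexity and general--position bookkeeping for the induced actions are, by contrast, routine consequences of Lemma 2.3 and the local analysis already carried out for Theorem 2.2.
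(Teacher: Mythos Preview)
Your approach diverges from the paper's and contains a genuine error at its central step. The paper does not cap off $\partial Q$ at all; instead it builds a direct model of $Q$ from the moment polytope. Using the Karshon--Tolman analysis, the fibres of $\bar\mu\colon Q\to P$ are either points or $2$--spheres, constant on each open face of $P$. This yields $Q\cong (P\times S^2)/\!\sim$, where $\sim$ collapses the $S^2$--factor over those faces of $P$ whose fibre is a point. For an action in general position every boundary face collapses and one recovers $S^{n+1}$. For an action not in general position there are ``special'' facets $F_1,\dots,F_m\subset\partial P$ over which the fibre remains $S^2$; writing $B_1,\dots,B_l$ for the connected components of $F_1\cup\cdots\cup F_m$, the paper identifies $Q$ with $S^{n+1}$ minus the open domains $U_i=B_i\times D^3$, and $\partial Q=\bigsqcup_i (B_i\times S^2)/\!\sim$.

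The step in your argument that fails is the assertion that each connected component of $\partial Q$ is homeomorphic to $S^n$, so that one may cap with copies of $D^{n+1}$. This is false: the explicit computations in Section~5 show that for $M_{(3,3,4,5,5)}$ the boundary of the orbit space is a \emph{single} component homeomorphic to $\#_{K_5}S^5$, the connected sum of five $5$--spheres along the complete graph $K_5$, and for $M_{(2,4,4,5,5)}$ it is $\#_{\tilde K_{5,5}}S^5$. Neither is a sphere (Alexander duality in the paper gives $H^4(Q_{(3,3,4,5,5)})\cong\mathbb{Z}^6$, whence $\partial Q$ has nontrivial $H_4$). Correspondingly the domains $U_i$ in the theorem are \emph{not} disks but open regions of the form $B_i\times D^3$, i.e.\ thickened unions of facets. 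What you flagged as ``bookkeeping'' --- checking that the pieces $W/T$ glue to spheres --- is therefore not a technicality but is simply untrue, and the capping construction cannot proceed. The moment--polytope model avoids this by never passing through a description of $\partial Q$ as a union of inductive quotients.
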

	\begin{proof}
		Since the moment map is constant on $T$-orbits, it induces the map $\overline \mu : Q \to P \subset \mathbb{R}^{n-1}$ ($P = \mu(M^{2n})$ is the moment polytope and $\pi$ is the canonical projection):
		
		\begin{center}
			\begin{tikzpicture}[scale=1.0]
			
			\coordinate (A1) at (-1,0);
			\coordinate (A2) at (1,0);						
			\coordinate (A3) at (0, -1);
			\coordinate (B1) at (0,0.1);
			\coordinate (B2) at (0.65,-0.65);		
			\coordinate (B3) at (-0.65,-0.65);
			
			\draw [thick,latex-latex] [->](-0.8,-0.05)--(0.7,-0.05) node[right]{};
			\draw [thick,latex-latex] [->](0.15,-0.85)--(0.85,-0.15) node[right]{};
			\draw [thick,latex-latex] [->](-0.9,-0.15)--(-0.1,-0.85) node[right]{};
			
			\node at (barycentric cs:A1=1) {$M$};
			\node at (barycentric cs:A2=1) {$P$};
			\node at (barycentric cs:A3=1) {$Q$};						
			\node at (barycentric cs:B1=1) {\small $\mu$};
			\node at (barycentric cs:B2=1) {\small $\overline \mu$};
			\node at (barycentric cs:B3=1) {\small $\pi$};		
			
			\end{tikzpicture}
		\end{center}
		
		In \cite{KT}, it was shown that $\overline \mu ^{-1} (y)$ for $y \in P$ is either a point or a 2-sphere and the conditions hold:
		
		$\bullet$ for an interior point $y \in \Int P$, the level set $\overline \mu ^{-1} (y)$ is a 2-sphere;
		
		$\bullet$ for points $y, y'$ from the relative interior of a face of $P$, the level sets $\overline \mu ^{-1} (y)$ and $\overline \mu ^{-1} (y')$ coincide.
		
		Therefore it is possible to construct a model for $Q \cong (P \times S^2) / \!\! \sim$ where $\sim$ is the relation which collapses the 2-spheres over some faces of $P$. For actions in general position, the level set $\overline \mu ^{-1} (y)$ is a point for any $y \in \partial P$. In the case the model $(P \times S^2) / \!\! \sim \ \cong (D^{n-1} \times S^2) / \!\! \sim$ is homeomorphic to the sphere $S^{n+1}$.
		
		For actions not in general position, we have that there are some facets $F_1, \ldots, F_m$, which we call special facets, such that the level set $\overline \mu ^{-1} (y)$ is a 2-sphere for $y \in F_i$. It is easy to show explicitly that the boundary of the orbit space $\partial Q =  \bigcup_i (F_i \times S^2)/ \!\! \sim$. Note that special facets are not necessarily disjoint (as we will see in Section 5 for Hessenberg varieties).
		
		Denote by $B_1, \ldots, B_l$ the connected components of the union of special facets $F_1 \cup \ldots \cup F_m$; the model $(P \times S^2) / \!\! \sim$ is homeomorphic to $S^{n+1} \setminus (U_1 \sqcup \ldots \sqcup U_l)$ where $U_i = B_i \times D^3$ is an open domain (here we assume that $D^3$ is the open 3-disk). The boundary of the orbit space $\partial Q = \bigsqcup_i \partial U_i$ where $\partial U_i = B_i \times S^2 / \!\! \sim$.
	\end{proof}

	\section{Regular Hessenberg varieties}
	
	Let us fix a linear transformation $A: \mathbb{C}^n \to \mathbb{C}^n$ and a Hessenberg function $h$. A Hessenberg variety $\Hess(A, h)$ defined by $A$ and $h$ is a subvariety of the full complex flag variety such that 	$$\Hess(A, h) = \{V_{\bullet} = (V_0, \ldots, V_n) \in Fl_n | AV_{i} \subset V_{h(i)} \}.$$
	
	Hessenberg varieties are studied in algebraic geometry (see, for example, \cite{AFZ}, \cite{AC}); in \cite{AB1}, it was shown that $H_h = \Hess(\Lambda, h)$ where $\Lambda = \diag(\lambda_1, \ldots, \lambda_n)$, $\lambda_i \neq \lambda_j$ for $i \neq j$ is, in a sense, dual to $M_h$, which we defined in Section 3. In particular, their GKM-graphs (the 1-skeleton of the partition by orbit types) and orbit spaces coincide. For illustrative purposes, we will describe partition elements $V_{G, i}$ for $H_h$ in terms of submanifolds in $M_h$.
	
	We will use the technique we discussed in the proof of Theorem 4.2 and describe the orbit spaces of $H_h$ using the moment polytope (since $H_h$ is a projective variety, it is symplectic and possesses a Hamiltonian torus action). For any $h$, the moment polytope for $H_h$ is the $(n-1)$-permutohedron.
	
	Note, however, that while $H_h$ are symplectic submanifolds of the full complex flag variety $Fl_n$, the restriction of the symplectic form from $\Herm(\Lambda)$ to $M_h$ is degenerate (see \cite{AB1}, \cite{AB2}; it follows from the fact that their moment polytopes are not convex).
	
	Therefore, the existence of an almost moment map (in the sense of \cite{BT1}) is a non-trivial fact given by the diagram:
	
	\begin{center}
		\begin{tikzpicture}[scale=1.0]
		
		\coordinate (A1) at (0,1);
		\coordinate (A2) at (-1.5,0);						
		\coordinate (A3) at (1.5,0);
		\coordinate (A4) at (0,-1);	
		\coordinate (A5) at (-1.5,-2);				
		
		\coordinate (B1) at (-0.9,-0.35);
		\coordinate (B2) at (0.9,-0.35);
		\coordinate (B3) at (-1.65,-1);
		\coordinate (B4) at (-0.9,-1.65);
		\coordinate (B5) at (-1,0.7);
		\coordinate (B6) at (1.05,0.75);
		
		\draw [thick,latex-latex] [->](-0.4,0.8)--(-1.35,0.15) node[right]{};
		\draw [thick,latex-latex] [->](0.4,0.8)--(1.3,0.2) node[right]{};
		\draw [thick,latex-latex] [->](-1.4,-0.2)--(-0.75,-0.75) node[right]{};
		\draw [thick,latex-latex] [->](1.4,-0.2)--(0.75,-0.75) node[right]{};
		\draw [thick,latex-latex] [->](-1.5,-0.2)--(-1.5,-1.8) node[right]{};
		\draw [thick,latex-latex] [->](-0.75,-1.25)--(-1.35,-1.8) node[right]{};
		
		\node at (barycentric cs:A1=1) {$U(n)$};
		\node at (barycentric cs:A2=1) {$H_h$};
		\node at (barycentric cs:A3=1) {$M_h$};
		\node at (barycentric cs:A4=1) {$H_h / T \cong M_h / T$};
		\node at (barycentric cs:A5=1) {$Pe^{n-1}$};	
		\node at (barycentric cs:B1=1) {\small $\pi_H$};
		\node at (barycentric cs:B2=1) {\small $\pi_M$};
		\node at (barycentric cs:B3=1) {\small $\mu$};
		\node at (barycentric cs:B4=1) {\small $\overline{\mu}$};
		\node at (barycentric cs:B5=1) {\small $/T$};
		\node at (barycentric cs:B6=1) {\small $/T$};
		
		\end{tikzpicture}
	\end{center}
	--- the map is given by the composition $\overline{\mu} \circ \pi_M$ of the canonical projection $\pi_M$ and the orbital moment map $\overline{\mu}$ for $H_h$. Recall that both $Fl_n$ and $\Herm(\Lambda)$, which contain $H_h$ и $M_h$ respectively, are orbit spaces of actions of maximal tori on the unitary group $U(n)$.
	
	$\bullet$ Consider the case $h = (3,3,4,4)$. The fixed points are diagonal matrices with entries $\lambda_1, \ldots, \lambda_4$ (due to the spectrum condition). There are $4! = 24$ such matrices and they correspond to the vertices of the 3-permutohedron. 
	
	Partition elements with 2-dimensional stabilizers are given by matrices of the following types:
	\begin{center}
		$\begin{pmatrix}
		a_1 & b_{12} & 0 & 0 \\ \overline{b}_{12} & a_2 & 0 & 0 \\ 0 & 0 & \lambda_i & 0 \\ 0 & 0 & 0 & \lambda_j
		\end{pmatrix},$ 
		$\begin{pmatrix}
		\lambda_i & 0 & 0 & 0 \\ 0 & a_2 & b_{23} & 0 \\ 0 & \overline{b}_{23} & a_3 & 0 \\ 0 & 0 & 0 & \lambda_j
		\end{pmatrix},$
		
		$\begin{pmatrix}
		\lambda_i & 0 & 0 & 0 \\ 0 & \lambda_j & 0 & 0 \\ 0 & 0 & a_3 & b_{34} \\ 0 & 0 & \overline{b}_{34} & a_4
		\end{pmatrix},$
		$\begin{pmatrix}
		a_1 & 0 & b_{13} & 0 \\ 0 & \lambda_i & 0 & 0 \\ \overline{b}_{13} & 0 & a_3 & 0 \\ 0 & 0 & 0 & \lambda_j
		\end{pmatrix}.$
	\end{center}
	
	Each possible type gives 12 partition elements (there are 12 possibilities for a choice of $\lambda_i, \lambda_j$). The first three types correspond to the edges of the permutohedron; the last type represents additional edges. They are admissible 1-polytopes (in the sense of \cite{BT1}, \cite{BT2}), i.e. the image of corresponding elements under the moment map is a subpolytope (the convex hull of some vertices of the moment polytope) which is not a face of the moment polytope.
	
	The elements corresponding to the described types represent all invariant 2-spheres, therefore we have the GKM-graph associated to $M_h$ (see Fig. 1). Its vertices are labeled with 4-permutations; for instance, the vertex $(1234)$ corresponds to the matrix $\diag(\lambda_1, \lambda_2, \lambda_3, \lambda_4)$. The edges of the permutohedron correspond to the transpositions $(12), (23), (34)$; for instance, the edge between the vertices $(1234)$ and $(1324)$ is given by the element of following matrices:
	$$\begin{pmatrix}	\lambda_1 & 0 & 0 & 0 \\ 
	0 & a_2 & b_{23} & 0 \\ 
	0 & \overline{b}_{12} & a_3 & 0 \\ 
	0 & 0 & 0 & \lambda_4
	\end{pmatrix}$$
	It is easy to show that due to the spectrum condition, the element is a 2-sphere touching the points $\diag(\lambda_1, \lambda_2, \lambda_3, \lambda_4)$ and $\diag(\lambda_1, \lambda_3, \lambda_2, \lambda_4)$.

	\begin{figure}
		\begin{center}
			\includegraphics[scale=0.7]{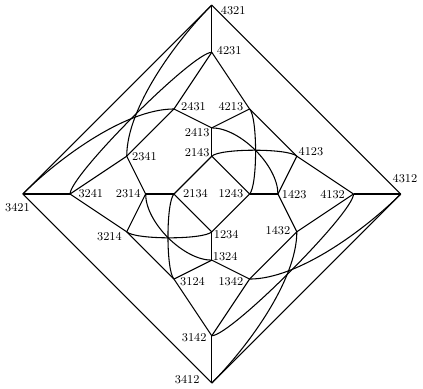}
			\caption{GKM-graph associated to $M_{(3,3,4,4)}$.}	
		\end{center}   
	\end{figure} 	
	
	%
	
	Analogously, the matrices of the last type correspond to the remaining edges, i.e. the edges corresponding to the transposition $(13)$. We have 3 additional edges in 4 mutually disjoint hexagons. In fact, each of the hexagons along with 3 diagonal edges is the GKM-graph associated to a submanifold equivariantly diffeomorphic to the full complex flag manifold $Fl_3$ (see the example after Remark 3.4).
	
	We now describe special facets of the moment polytope. The following matrices:
	
	\begin{center}
		$\begin{pmatrix} a_1 & b_{12} & b_{13} & 0\\ {\overline b_{12}} & a_2 & b_{23} & 0\\ {\overline b_{13}} & {\overline b_{23}} & a_3 & 0\\0 & 0 & 0 & \lambda_j \end{pmatrix},
		\begin{pmatrix} a_1 & b_{12} & b_{13} & 0\\ {\overline b_{12}} & a_2 & 0 & 0\\ {\overline b_{13}} & 0 & a_3 & b_{34}\\0 & 0 & {\overline b_{34}} & a_4 \end{pmatrix}$
		
		$\begin{pmatrix} a_1 & b_{12} & 0 & 0\\ {\overline b_{12}} & a_2 & b_{23} & 0\\ 0 & {\overline b}_{23} & a_3 & b_{34}\\0 & 0 & {\overline b}_{34} & a_4 \end{pmatrix},
		\begin{pmatrix} a_1 & 0 & b_{13} & 0\\ 0 & a_2 & b_{23} & 0\\ {\overline b_{13}} & {\overline b_{23}} & a_3 & b_{34}\\0 & 0 & {\overline b_{34}} & a_4 \end{pmatrix}$
	\end{center}
	
	--- correspond to elements of dimension 6. However, the dimension of effective torus action equals 2 for the first type and 3 for the other types. Therefore, the special facets are given by the elements consisting of the matrices of the first type. There are 4 special facets --- 4 mutually disjoint hexagons; the corresponding element, as it was noted before, are submanifolds equivariantly diffeomorphic to the full complex flag manifold $Fl_3$. Thus, the orbit space $$M_h / T^3 \cong S^5 \setminus (\bigsqcup_{i=1}^4 D^5).$$
	
	Note that for $n=4$, there is another case of Hessenberg varieties $H_h$ (and, respectively, manifolds of isospectral matrices $M_h$) where the torus action is of complexity 1. It is the symmetrical case $h = (2, 4, 4, 4)$ (the symmetry is provided by the transposition of matrices with respect to the anti-diagonal). For $n=5$, we have two different cases $h = (3, 3, 4, 5, 5)$ и $h = (2, 4, 4, 5, 5)$ (the first one is symmetrical to the case $h = (2, 3, 5, 5, 5)$); we will see that the orbit spaces have different topology.
	
	Before studying the orbit spaces $H_h$ and $M_h$ for $n=5$, we will describe the partition elements corresponding to special facets for arbitrary $n$. 
	
	Nore that the permutohedron $Pe^n$ admits a natural $n$-coloring of facets in the following way. Any face of $Pe^n$ is a product of permutohedra of lower dimensions; naturally, facets are the products $Pe^{k-1} \times Pe^{n-k}$. One can color a facet of the form $Pe^{k-1} \times Pe^{n-k}$ with the color $k$.
	
	It is possible to describe faces of the $n$-permutohedron using the manifold of tridiagonal Hermitian $(n+1)\times(n+1)$ matrices (it is a quasitoric manifold over the permutohedron), i.e. the submanifold $M_{(2, 3, \ldots, n-1,n,n)}$ of matrices of the following form:	
	\begin{center}
		$\begin{pmatrix}
		a_1 & b_{12} & 0 & \ldots & 0 & 0\\ \overline{b}_{12} & a_2 & b_{23} & \ldots & 0 & 0\\ 0 & \overline{b}_{23} & a_3 & \ldots & 0 & 0\\ \ldots & \ldots & \ldots & \ldots & \ldots & \ldots\\ 0 & 0 & 0 & \ldots & a_n & b_{n,n+1} \\ 0 & 0 & 0 & \ldots & \overline{b}_{n,n+1}  & a_{n+1}
		\end{pmatrix}$
	\end{center}
	It is easy to see that facets of color $k$ are submanifolds of $M_{(2, 3, \ldots, n-1,n,n)}$ given by $b_{k,k+1} = \nolinebreak 0$. Since we have block matrices in the case, such submanifold is the product of two submanifolds of tridiagonal matrices of smaller dimension. There are $\binom{n}{k}$ facets of color $k$ (that is, the possible choices of $k$ eigenvalues to be the spectrum of the upper $k \times k$ block).
	
	Consider now a Hessenberg function $h: [n+1] \to [n+1]$ such that $h(i) = i+1$ for all $i$ except of $i_0 \in \{1, \ldots, n-1 \}$ for which $h(i_0) = i_0+2$. The corresponding Hessenberg variety $H_h$ and isospectral submanifold $M_h$ are of dimension $2n+2$; the dimension of тор, действующий эффективно, имеет размерность $n$. Образом отображения моментов является пермутоэдр $Pe^n$.
	
	We now describe the special facets in this case. It is easy to see that the elements corresponding to the special facets are among those given by one equation $b_{i, i+1} = 0$ for $i \in [n]$ or $b_{i_0, i_0+2} = 0$ (otherwise the dimension of the elements implies that they correspond to faces of codimension $\geq 2$). It follows easily that only for $b_{i_0, i_0+1} = 0$, $b_{i_0+1, i_0+2} = 0$ and $b_{i_0, i_0+2} = 0$, the effective torus action is half-dimensional and the elements given by the other equations correspond to special facets; therefore, the following proposition is true.
	
	\begin{prop}
		For Hessenberg varieties with torus actions of complexity 1 (defined by Hessenberg functions $h$ described above), the special facets of the moment polytope $Pe^n$ are exactly facets of color $k$ for $k \neq i_0, i_0+1$.
	\end{prop}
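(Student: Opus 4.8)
The plan is to describe, facet by facet, the $T$-invariant submanifold of $M_h$ (equivalently, of its twin $H_h$ from \cite{AB1}) lying over that facet of the moment polytope $Pe^n$, and then to decide which of these submanifolds carries an effective torus action of complexity $1$. By the analysis in the proof of Theorem~4.2, these are exactly the ones sitting over the special facets, so the whole statement reduces to a bookkeeping with matrix patterns.

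First recall the criterion: a facet $F$ of $Pe^n$ is special exactly when the level set $\overline{\mu}^{-1}(y)$ is a $2$-sphere, rather than a point, for $y$ in the relative interior of $F$. The preimage $N_F\subseteq M_h$ of $F$ is again a Hamiltonian space --- in the $H_h$ picture it is a product of lower-dimensional regular semisimple Hessenberg varieties --- so the Karshon--Tolman dichotomy of \cite{KT} may be applied to $N_F$ itself: $F$ is special iff the induced effective action on $N_F$ has complexity $1$, and $F$ is a regular (i.e.\ non-special) facet iff that action is half-dimensional, in which case $N_F$ is quasitoric over $F$ and $\overline{\mu}$ has point fibers over $F$. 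Since $\dim_{\mathbb{R}} F = n-1$ and the generic stabilizer along $N_F$ is a $1$-dimensional subtorus, the first alternative forces $\dim_{\mathbb{R}} N_F = 2n$, i.e.\ $\codim_{\mathbb{R}} N_F = 2$, so that $N_F$ is cut out from $M_h$ by a single equation; the second forces $\dim_{\mathbb{R}} N_F = 2n-2$.

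Next I would run through the loci in $M_h$ obtained by setting one off-diagonal entry to zero. Imposing $b_{i_0,i_0+2}=0$ gives the submanifold of tridiagonal matrices, which is quasitoric over all of $Pe^n$, not over a facet. Imposing $b_{i,i+1}=0$ for $i\notin\{i_0,i_0+1\}$ disconnects $\{1,\ldots,n+1\}$ into the intervals $\{1,\ldots,i\}$ and $\{i+1,\ldots,n+1\}$, one of which contains both $i_0$ and $i_0+2$; the matrix becomes block-diagonal and $N_F$ is a product of a manifold of tridiagonal isospectral Hermitian matrices and a manifold of almost-tridiagonal ones (the block that keeps the entry $b_{i_0,i_0+2}$). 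Imposing $b_{i_0,i_0+1}=0$ or $b_{i_0+1,i_0+2}=0$ does \emph{not} disconnect $\{1,\ldots,n+1\}$, because $b_{i_0,i_0+2}$ still bridges the two would-be blocks; such a locus does not lie over a facet. Hence the single-equation loci lying over facets are exactly the $\{b_{i,i+1}=0\}$ with $i\notin\{i_0,i_0+1\}$, and under the $n$-coloring of the facets of $Pe^n$ (in the tridiagonal model the facet of color $k$ is $\{b_{k,k+1}=0\}$) each such locus sits over a facet of color $i$.

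Finally I would carry out the two dimension counts. For $i\notin\{i_0,i_0+1\}$ the product above has $\dim_{\mathbb{R}} N_F = 2n$ (the tridiagonal block of size $m$ contributing $2(m-1)$ and the almost-tridiagonal block of size $n+1-m$ contributing $2(n+1-m)$), while its generic stabilizer in the effective torus is a circle --- conjugation must act by a scalar on each of the two blocks --- so the effective torus is $T^{n-1}$ and the action has complexity $1$; thus the facet of color $i$ is special. For $i\in\{i_0,i_0+1\}$ the facet of color $i$ is covered instead by the submanifold obtained by also imposing $b_{i_0,i_0+2}=0$, a product of two tridiagonal isospectral manifolds of total real dimension $2n-2$ with effective torus $T^{n-1}$, hence half-dimensional; so those facets are regular. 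Putting the cases together gives the statement: the special facets of $Pe^n$ are precisely the facets of color $k$ with $k\neq i_0,i_0+1$. The step I expect to demand the most care is pinning down $N_F$ as exactly the stated product of lower-dimensional isospectral-matrix (equivalently Hessenberg) varieties, with the claimed dimension and generic stabilizer, and transferring the point-versus-$2$-sphere alternative of Theorem~4.2 to $N_F$; once that local picture is secured, the coloring conclusion is immediate.
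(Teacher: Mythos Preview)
Your approach is essentially the same as the paper's: both arguments list the codimension-$2$ invariant submanifolds cut out by a single equation $b_{i,i+1}=0$ or $b_{i_0,i_0+2}=0$, then separate them according to whether the effective torus action is half-dimensional or of complexity~$1$. The paper simply asserts that ``only for $b_{i_0,i_0+1}=0$, $b_{i_0+1,i_0+2}=0$ and $b_{i_0,i_0+2}=0$ the effective torus action is half-dimensional,'' whereas you unpack this by checking block-diagonality and computing generic stabilizers; you also go one step further than the paper by explicitly identifying the preimage of a facet of color $i_0$ or $i_0+1$ as the two-equation locus $\{b_{i,i+1}=b_{i_0,i_0+2}=0\}$ and verifying it is half-dimensional. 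This extra step is not in the paper's text but is the natural way to close the case you correctly flag: the single-equation loci $\{b_{i_0,i_0+1}=0\}$ and $\{b_{i_0+1,i_0+2}=0\}$ do not themselves lie over facets (their moment images hit all the vertices of $Pe^n$), so one must check separately that those two colors are not special. Your concern about pinning down $N_F$ as a product of smaller isospectral manifolds is legitimate but routine once one uses that the moment map on $\Herm(\Lambda)$ is the diagonal map, so block-diagonality is exactly what forces the image into $Pe^{k-1}\times Pe^{n-k}$.
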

	
	For the case $n = 5$, we will explicitly show the topology of the orbit spaces.
	
	$\bullet$ Let $h = (3, 3, 4, 5, 5)$; the corresponding manifold $M_h$ consists of matrices of the following form:
	
	\begin{center}
		$\begin{pmatrix}
		a_1 & b_{12} & b_{13} & 0 & 0\\ \overline{b}_{12} & a_2 & b_{23} & 0 & 0\\ \overline{b}_{13} & \overline{b}_{23} & a_3 & b_{34} & 0 \\ 0 & 0 & \overline{b}_{34} & a_4 & b_{45} \\ 0 & 0 & 0 & \overline{b}_{45} & a_5
		\end{pmatrix}$
	\end{center}
	
	The special facets correspond to the elements of matrices of the following types:
	
	\begin{center}
		$$A_{3,3,3,5,5}:\begin{pmatrix}
		a_1 & b_{12} & b_{13} & 0 & 0\\ \overline{b}_{12} & a_2 & b_{23} & 0 & 0\\ \overline{b}_{13} & \overline{b}_{23} & a_3 & 0 & 0 \\ 0 & 0 & 0 & a_4 & b_{45} \\ 0 & 0 & 0 & \overline{b}_{45} & a_5
		\end{pmatrix},$$
		$$A_{3,3,4,4,5}:\begin{pmatrix}
		a_1 & b_{12} & b_{13} & 0 & 0\\ \overline{b}_{12} & a_2 & b_{23} & 0 & 0\\ \overline{b}_{13} & \overline{b}_{23} & a_3 & b_{34} & 0 \\ 0 & 0 & \overline{b}_{34} & a_4 & 0 \\ 0 & 0 & 0 & 0 & \lambda_i
		\end{pmatrix}.$$
	\end{center}
	
	The elements of the type $A_{3,3,3,5,5}$ correspond to the facets of color $3$; such facets are hexagonal prisms $Pe^2 \times Pe^1$ since $Pe^2$ is a hexagon and $Pe^1$ is a line segment. The elements of the type $A_{3,3,4,4,5}$ correspond to the facets of color $4$; such faces are permutohedra $Pe^3$. There are 10 facets of color $3$ and 5 facets of color $4$.
	
	We will prove that the boundary of the orbit space consists of one connected component. Indeed, a facet of color $4$, corresponding to the element of matrices of the type $A_{3,3,4,4,5}$ with $\lambda_i$, intersects four facets of color $3$, corresponding to the element of matrices of the type $A_{3,3,3,5,5}$ such that the lower block $\begin{pmatrix} a_4 & b_{45} \\ \overline{b}_{45} & a_5 \end{pmatrix}$ has eigenvalue $\lambda_i$.
	
	From the point of view of the orbit space, we have the 5-sphere without 4 disks over each facet of color $4$ since each element of the type $A_{3,3,4,4,5}$ is equivariantly diffeomorphic to the Hessenberg variety $H_{(3, 3, 4, 4)}$. Over each facet of color $3$ we have the $"$tube$"$ $S^4 \times [0,1]$. As it was shown earlier, each facet of color $4$ intersects four facets of color $3$; hence, each tube connects the punctured 5-spheres with each other (the intersection of a tube with a punctured sphere is the boundary component $S^4$).
	
	Therefore, we have the complete graph $K_5$: its vertices are the punctured 5-spheres and its edges are the connecting tubes. The vertex labeled with the number $i$ corresponds to the element of the type $A_{3,3,4,4,5}$ with $\lambda_i$; the edge connecting vertices $i$ and $j$ corresponds to the element of the type $A_{3,3,3,5,5}$ such that the lower block $\begin{pmatrix} a_4 & b_{45} \\ \overline{b}_{45} & a_5 \end{pmatrix}$ has eigenvalues $\lambda_i$ and $\lambda_j$.
	
	
	
	
	
	
	
	Therefore, the orbit space $Q_{(3,3,4,5,5)} = M_{(3,3,4,5,5)} / T^4$ has only one boundary component $\#_{K_5} S^5$, the connected sum of five spheres $S^5$ over the graph $K_5$. The orbit space is homeomorphic to $$S^6 \setminus (\#_{K_5} D^6)$$ --- the complement of the connected sum of five open 6-disks in the 6-sphere.
	
	$\bullet$ 
	Let $h = (2, 4, 4, 5, 5)$; the corresponding manifold $M_h$ consists of matrices of the following form:
	
	\begin{center}
		$\begin{pmatrix}
		a_1 & b_{12} & 0 & 0 & 0\\ \overline{b}_{12} & a_2 & b_{23} & b_{24} & 0\\ 0 & \overline{b}_{23} & a_3 & b_{34} & 0\\ 0 & \overline{b}_{24} & \overline{b}_{34} & a_4 & b_{45} \\ 0 & 0 & 0 & \overline{b}_{45} & a_5
		\end{pmatrix},$
	\end{center}
	
	The special facets correspond to the elements of matrices of the following types:
	
	\begin{center}
		$$A_{1,4,4,5,5}:\begin{pmatrix}
		\lambda_i & 0 & 0 & 0 & 0\\ 0 & a_2 & b_{23} & b_{24} & 0\\ 0 & \overline{b}_{23} & a_3 & b_{34} & 0\\ 0 & \overline{b}_{24} & \overline{b}_{34} & a_4 & b_{45} \\ 0 & 0 & 0 & \overline{b}_{45} & a_5
		\end{pmatrix},$$
		$$A_{2,4,4,4,5}\begin{pmatrix}
		a_1 & b_{12} & 0 & 0 & 0\\ \overline{b}_{12} & a_2 & b_{23} & b_{24} & 0\\ 0 & \overline{b}_{23} & a_3 & b_{34} & 0\\ 0 & \overline{b}_{24} & \overline{b}_{34} & a_4 & 0 \\ 0 & 0 & 0 & 0 & \lambda_i
		\end{pmatrix}.$$
	\end{center}
	
	The elements of both types correspond to the facets of color $1$ and $4$; such facets are permutohedra $Pe^3$. There are 10 facets of color $1$ and 10 facets of color $4$. A facet of color $1$, corresponding to the element of matrices of the type $A_{1,4,4,5,5}$ with $\lambda_i$, intersects four facets of color $4$, corresponding to the element of matrices of the type $A_{2,4,4,4,5}$ with $\lambda_j$ for $j \neq i$.
	
	From the point of view of the orbit space, we have the 5-sphere without 4 disks over each facet since each element of both types is equivariantly diffeomorphic to the Hessenberg variety $H_{(3, 3, 4, 4)}$. The intersection of two punctured spheres is the boundary component $S^4$.
	
	We have the almost complete bipartite graph $\tilde{K}_{5,5}$ (see Fig. 2): the vertex, say, $1a$ is connected with all vertices from the part $b$ except of $1b$. The vertex $1a$ (respectively, $1b$) corresponds to the elements of the type $A_{1,4,4,5,5}$ (respectively, $A_{2,4,4,4,5}$) with $\lambda_1$.
	
	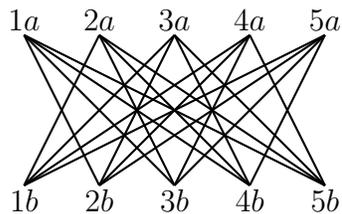
\begin{figure}
		\begin{center}
			\begin{tikzpicture}[scale=1.0]
			
			\coordinate (A1) at (-2,-1.2);
			\coordinate (A2) at (-1,-1.2);						
			\coordinate (A3) at (0,-1.2);
			\coordinate (A4) at (1,-1.2);	
			\coordinate (A5) at (2,-1.2);				
			
			\coordinate (B1) at (-2,1.2);
			\coordinate (B2) at (-1,1.2);						
			\coordinate (B3) at (0,1.2);
			\coordinate (B4) at (1,1.2);	
			\coordinate (B5) at (2,1.2);	
			
			\draw [thick,latex-latex] [-](-2,-1)--(-1,1) node[right]{};
			\draw [thick,latex-latex] [-](-2,-1)--(0,1) node[right]{};
			\draw [thick,latex-latex] [-](-2,-1)--(1,1) node[right]{};
			\draw [thick,latex-latex] [-](-2,-1)--(2,1) node[right]{};
			
			\draw [thick,latex-latex] [-](-1,-1)--(-2,1) node[right]{};
			\draw [thick,latex-latex] [-](-1,-1)--(0,1) node[right]{};
			\draw [thick,latex-latex] [-](-1,-1)--(1,1) node[right]{};
			\draw [thick,latex-latex] [-](-1,-1)--(2,1) node[right]{};
			
			\draw [thick,latex-latex] [-](0,-1)--(-2,1) node[right]{};
			\draw [thick,latex-latex] [-](0,-1)--(-1,1) node[right]{};
			\draw [thick,latex-latex] [-](0,-1)--(1,1) node[right]{};
			\draw [thick,latex-latex] [-](0,-1)--(2,1) node[right]{};
			
			\draw [thick,latex-latex] [-](1,-1)--(-2,1) node[right]{};
			\draw [thick,latex-latex] [-](1,-1)--(-1,1) node[right]{};
			\draw [thick,latex-latex] [-](1,-1)--(0,1) node[right]{};
			\draw [thick,latex-latex] [-](1,-1)--(2,1) node[right]{};
			
			\draw [thick,latex-latex] [-](2,-1)--(-2,1) node[right]{};
			\draw [thick,latex-latex] [-](2,-1)--(-1,1) node[right]{};
			\draw [thick,latex-latex] [-](2,-1)--(0,1) node[right]{};
			\draw [thick,latex-latex] [-](2,-1)--(1,1) node[right]{};

			\node at (barycentric cs:A1=1) {$1b$};
			\node at (barycentric cs:A2=1) {$2b$};
			\node at (barycentric cs:A3=1) {$3b$};
			\node at (barycentric cs:A4=1) {$4b$};
			\node at (barycentric cs:A5=1) {$5b$};
			
			\node at (barycentric cs:B1=1) {$1a$};
			\node at (barycentric cs:B2=1) {$2a$};
			\node at (barycentric cs:B3=1) {$3a$};
			\node at (barycentric cs:B4=1) {$4a$};
			\node at (barycentric cs:B5=1) {$5a$};
			\end{tikzpicture}
			\caption{Almost complete bipartite graph $\tilde{K}_{5,5}$.}	
		\end{center}
	\end{figure}
	
	We conclude that the boundary of the orbit space $Q_{(2,4,4,5,5)}$ consists of one connected component $\#_{\tilde{K}_{5,5}} S^5$, i.e. the connected sum of ten spheres $S^5$ over the graph $\tilde{K}_{5,5}$. The orbit space $Q_{(2,4,4,5,5)}$ is homeomorphic to $$S^6 \setminus (\#_{\tilde{K}_{5,5}} D^6).$$
	
	Now note that since each of the orbit spaces $Q_{(2,4,4,5,5)}$ and $Q_{(3,3,4,5,5)}$ is the complement of a tubular neighborhood of a graph in the 6-sphere, their homology groups are determined by the graphs due to Alexander duality. In particular, 
	$$H^4(Q_{(3,3,4,5,5)}) = H^4 (S^6 \setminus (\#_{K_5} D^6)) \cong H_1(\#_{K_5} D^6),$$
	$$H^4(Q_{(2,4,4,5,5)}) = H^4 (S^6 \setminus (\#_{\tilde{K}_{5,5}} D^6)) \cong H_1(\#_{\tilde{K}_{5,5}} D^6).$$
	Homology groups of each tubular neighborhood are those of the corresponding graphs:
	$$H_1(\#_{K_5} D^6) \cong H_1(K_5) \cong \mathbb{Z}^6,$$
	$$H_1(\#_{\tilde{K}_{5,5}} D^6) \cong H_1(\tilde{K}_{5,5}) \cong \mathbb{Z}^{11}.$$
	It also follows that cohomologies of both $Q_{(3,3,4,5,5)}$ and $Q_{(2,4,4,5,5)}$ in dimensions 1, 2 and 3 are trivial. In fact, we have the following general statement.
	\begin{thm}
		For a Hamiltonian action $T^{n-1} \curvearrowright M^{2n}$ not in general position, the reduced cohomology groups $\tilde{H}^i (M^{2n} / T^{n-1}) = 0$ for $i = 0, 1, 2$. 
	\end{thm}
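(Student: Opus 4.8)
The plan is to feed the topological model of Theorem~4.2 into Alexander duality in the sphere $S^{n+1}$, which turns the statement into a combinatorial fact about the moment polytope. By Theorem~4.2 we may assume $Q\cong S^{n+1}\setminus(U_1\sqcup\dots\sqcup U_l)$, where each $U_j=B_j\times D^3$ is an open domain and $B_1,\dots,B_l$ are the connected components of the union of the special facets of the moment polytope $P$. Since $M$ is compact, $Q=M/T$ is a compact manifold with corners, hence a compact ANR, so Alexander duality applies to the embedding $Q\subset S^{n+1}$ and gives, for every $i$,
\[
\widetilde H^{i}(Q)\cong\widetilde H_{n-i}\bigl(S^{n+1}\setminus Q\bigr)=\widetilde H_{n-i}\Bigl(\textstyle\bigsqcup_{j}U_j\Bigr)\cong\bigoplus_{j=1}^{l}\widetilde H_{n-i}(B_j),
\]
where the last isomorphism uses $U_j=B_j\times D^3\simeq B_j$ together with the fact that, since $n\ge3$ (an action on $M^{2n}$ with $n\le2$ is automatically in general position), reduced homology of a finite disjoint union splits in positive degrees. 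Thus it suffices to prove that every $B_j$ has vanishing homology in degrees $n$, $n-1$ and $n-2$.

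Degrees $n$ and $n-1$ are immediate: each $B_j$ is a finite polytopal complex contained in $\partial P$, and since $P$ is an $(n-1)$-dimensional polytope we have $\partial P\cong S^{n-2}$ and $\dim B_j\le n-2$, whence $\widetilde H_n(B_j)=\widetilde H_{n-1}(B_j)=0$; this already settles $i=0$ and $i=1$. For degree $n-2$ I would proceed as follows. Because $B_j$ has no cells above dimension $n-2$, the group $H_{n-2}(B_j)$ coincides with the group of cellular $(n-2)$-cycles, and the inclusion $B_j\hookrightarrow\partial P$ identifies it with $C_{n-2}(B_j)\cap Z_{n-2}(\partial P)$; but $Z_{n-2}(\partial P)=H_{n-2}(\partial P)\cong\mathbb Z$ is generated by the fundamental cycle $[\partial P]=\sum_F\pm F$, whose support is the set of \emph{all} facets of $P$. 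Hence $H_{n-2}(B_j)\ne0$ would force $B_j=\partial P$, i.e.\ every facet of $P$ to be special, and the theorem reduces to the claim that $P$ has at least one non-special facet.

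This last claim is where the hypothesis ``not in general position'' is genuinely needed, and I expect its proof in full generality to be the main obstacle. For the Hessenberg varieties it is exactly Proposition~5.1: the special facets of the permutohedron $Pe^n$ are those of colour $k\ne i_0,i_0+1$, so the facets of colours $i_0$ and $i_0+1$ are non-special. In general one can read it off from the local model of Lemma~2.3: at a fixed point $x$ the primitive relation $c_1\alpha_1+\dots+c_n\alpha_n=0$ has at least two nonzero coefficients (a single nonzero coefficient would force some $\alpha_i=0$, and none would contradict $\gcd(c_1,\dots,c_n)=1$), so the corner $\mathbb R^{m+1}\times\mathbb R_{\geq}^{n-m}$ of $Q$ at $Tx$ has $m\ge2$; consequently at most $n-m\le n-2$ of the facets of $P$ through the vertex $\mu(x)$ can be special, while every vertex of the $(n-1)$-dimensional polytope $P$ lies on at least $n-1$ facets. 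Therefore some facet through $\mu(x)$ is non-special, so $B_j\subsetneq\partial P$ for every $j$, $H_{n-2}(B_j)=0$, and $\widetilde H^{i}(Q)=0$ for $i=0,1,2$ as claimed.
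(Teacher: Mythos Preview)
Your argument is the paper's argument, carried out in more detail. The paper's proof is a single sentence: Alexander duality in $S^{n+1}$ applied to the model of Theorem~4.2, together with the observation that each $U_j=B_j\times D^3$ is (homotopy equivalent to something) of dimension at most $n-2$. You reproduce exactly this, and then go further.

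Where you add content is the case $i=2$. The dimension bound $\dim B_j\le n-2$ kills $\widetilde H_n$ and $\widetilde H_{n-1}$ but not $\widetilde H_{n-2}$ on its own, and you correctly isolate the remaining claim: $B_j\subsetneq\partial P$, i.e.\ at least one facet of $P$ is non-special. The paper does not spell this out, so your version is in fact more complete here. Your counting argument is sound once one notes that, in the model $Q\cong(P\times S^2)/\!\sim$, the facets of $Q$ through a point $q$ with $\bar\mu(q)=v$ correspond bijectively to the special facets of $P$ through $v$; combined with the paper's remark after Theorem~2 that no corner of $Q$ is deeper than $\mathbb R^{3}\times\mathbb R_{\ge}^{\,n-2}$, this gives at most $n-2$ special facets through any vertex versus at least $n-1$ facets in total.

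One small point to tighten: you write ``the vertex $\mu(x)$'', but for complexity-one Hamiltonian actions a fixed point need not map to a vertex of $P$. This is harmless---by Atiyah--Guillemin--Sternberg the vertices of $P$ lie in $\mu(M^T)$, so you may simply choose $x$ with $\mu(x)$ a vertex---but it should be said.
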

	The theorem follows from Alexander duality and the proof of Theorem 4.2 where we showed that $M^{2n} / T^{n-1}$ is homeomorphic to $S^{n+1} \setminus (U_1 \sqcup \ldots \sqcup U_l)$ where $U_i = B_i \times D^3$ have dimension at most $n-2$.

\end{document}